\newtheorem{thm}{Theorem}[section]
\newtheorem{lem}[thm]{Lemma}
\newcommand{\1}{\partial}
\newcommand{\3}{\varepsilon}
\newcommand{\R}{{\mathbb R}}
\newcommand{\cD}{{\mathcal D}}
\newcommand{\cX}{{\mathcal X}}
\newcommand{\ve}{\varepsilon}
\begin{document}
\title{Existence of hypercylinder expanders of the inverse mean curvature flow}
\author{Kin Ming Hui\\
Institute of Mathematics, Academia Sinica\\
Taipei, Taiwan, R. O. C.}
\date{Dec 14, 2019}
\smallbreak \maketitle
\begin{abstract}
We will give a new proof of the existence of hypercylinder expander of the inverse mean curvature flow which is a radially symmetric homothetic soliton of the inverse mean curvature flow  in $\mathbb{R}^n\times \mathbb{R}$, $n\ge 2$, of the form $(r,y(r))$ or $(r(y),y)$ where $r=|x|$, $x\in\mathbb{R}^n$, is the radially symmetric coordinate and $y\in \mathbb{R}$. More precisely for any $\lambda>\frac{1}{n-1}$ and $\mu>0$, we will  give a new proof of the existence of a unique even solution $r(y)$ of the equation $\frac{r''(y)}{1+r'(y)^2}=\frac{n-1}{r(y)}-\frac{1+r'(y)^2}{\lambda(r(y)-yr'(y))}$ in $\mathbb{R}$ which satisfies $r(0)=\mu$, $r'(0)=0$ and $r(y)>yr'(y)>0$  for any $y\in\mathbb{R}$. We will  prove that $\lim_{y\to\infty}r(y)=\infty$ and  $a_1:=\lim_{y\to\infty}r'(y)$ exists with $0\le a_1<\infty$. We will also give a new proof of the existence of a constant $y_1>0$ such that $r''(y_1)=0$, $r''(y)>0$ for any $0<y<y_1$ and $r''(y)<0$ for any $y>y_1$. 
\end{abstract}

\vskip 0.2truein

Key words: inverse mean curvature flow, hypercylinder expander solution, existence, asymptotic behaviour

AMS 2010 Mathematics Subject Classification: Primary 35K67, 35J75 Secondary 53C44

\vskip 0.2truein
\setcounter{equation}{0}
\setcounter{section}{0}

\section{Introduction}
\setcounter{equation}{0}
\setcounter{thm}{0}

Consider a family of immersions $F:M^n\times [0,T)\to\R^{n+1}$ of $n$-dimensional hypersurfaces in $\R^{n+1}$. We say that $M_t=F_t(M^n)$, $F_t(x)=F(x,t)$,  moves by the inverse mean curvature flow if 
\begin{equation*}
\frac{\1}{\1 t}F(x,t)=-\frac{\nu}{H}\quad\forall x\in M^n, 0<t<T
\end{equation*}
where $H(x,t)>0$ and $\nu$ are the mean curvature and unit interior normal of the surface
$F_t$ at the point $F(x,t)$. Recently there are a lot of study on the inverse mean curvature flow by P.~Daskalopoulos, C.~Gerhardt, K.M.~Hui \cite{H}, G.~Huisken, T.~Ilmanen, K.~Smoczyk, J.~Urbas and others \cite{DH}, \cite{G},  \cite{HI1}, \cite{HI2}, \cite{HI3}, \cite{S}, \cite{U}. Although there are a lot of study on the inverse mean curvature flow
on the compact case, there are not many results for the non-compact case. 

Recall that \cite{DLW} a $n$-dimensional submanifold $\Sigma$ of $\R^{n+1}$ with immersion $X:\Sigma\to\R^{n+1}$ and non-vanishing mean curvature $H$ is called a homothetic soliton for the inverse mean curvature flow if there exists a constant $\lambda\ne 0$ such that
\begin{equation}\label{homothetic-soliton-defn}
-\frac{\nu(p)}{H(p)}=\lambda X(p)^{\perp}\quad\forall p\in \Sigma
\end{equation} 
where $X(p)^{\perp}$ is the component of $X(p)$ that is normal to the tangent space $T_{X(p)}(X(\Sigma))$ at $X(p)$.
As proved by G.~Drugan, H.~Lee and G.~Wheeler in \cite{DLW} \eqref{homothetic-soliton-defn} is equivalent to 
\begin{equation}\label{homothetic-soliton-equiv-defn}
-<H\nu, X>=\frac{1}{\lambda}\quad\Leftrightarrow\quad-<\Delta_gX,X>=\frac{1}{\lambda}\quad\forall X\in\Sigma
\end{equation}
where $g$ is the induced metric of the immersion $X:\Sigma\to\R^{n+1}$.
If the homothetic soliton of the inverse mean curvature flow is a radially symmetric solution in $\R^n\times \R$, $n\ge 2$, of the form $(r,y(r))$ or $(r(y),y)$ where $r=|x|$, $x\in\R^n$, is the radially symmetric coordinate, $y\in\R$, then by \eqref{homothetic-soliton-equiv-defn} and a direct computation $r(y)$ satisfies the equation
\begin{equation}\label{r-eqn}
\frac{r''(y)}{1+r'(y)^2}=\frac{n-1}{r(y)}-\frac{1+r'(y)^2}{\lambda(r(y)-yr'(y))}\,\,\,,\quad r(y)>0
\end{equation}
or equivalently $y(r)$ satisfies the equation,
\begin{equation*}
y_{rr}+\frac{n-1}{r}\cdot(1+y_r^2)y_r-\frac{(1+y_r^2)^2}{\lambda(ry_r-y)}=0
\end{equation*}
where $r'(y)=\frac{dr}{dy}$, $r''(y)=\frac{d^2r}{dy^2}$ and $y_r(r)=\frac{dy}{dr}$, $y_{rr}(r)=\frac{d^2y}{dr^2}$ etc. In the paper \cite{DLW} G.~Drugan, H.~Lee and G.~Wheeler stated the existence and asymptotic behaviour of hypercylinder expanders which are homothetic soliton for the inverse mean curvature flow with $\lambda>1/n$. However there are no proof of the existence result in that paper except for the case $\lambda=\frac{1}{n-1}$ and the proof of the  asymptotic behaviour of hypercylinder expanders there are very sketchy. In this paper I will give a new proof of the existence of hypercylinder expanders for the inverse mean curvature flow with $\lambda>\frac{1}{n-1}$. We will also give a new proof of the asymptotic behaviour of these hypercylinder expanders.  

More precisely I will prove the following main results.

\begin{thm}\label{existence_soln-thm}
For any $n\ge 2$, $\lambda>\frac{1}{n-1}$ and $\mu>0$, there exists a unique even solution $r(y)\in C^2(\R)$ of the equation 
\begin{equation}\label{ode-ivp}
\left\{\begin{aligned}
&\frac{r''(y)}{1+r'(y)^2}=\frac{n-1}{r(y)}-\frac{1+r'(y)^2}{\lambda(r(y)-yr'(y))}\,\,\,,\quad r(y)>0,\quad\forall y\in\R\\
&r(0)=\mu,\quad r'(0)=0\end{aligned}\right.
\end{equation}
which satisfies
\begin{equation}\label{f-structure-ineqn}
r(y)>yr'(y)\quad\forall y\in\R
\end{equation}
and 
\begin{equation}\label{r-2nd-derivative-x=0}
r''(0)=\left(n-1-\frac{1}{\lambda}\right)\frac{1}{\mu}.
\end{equation}
\end{thm}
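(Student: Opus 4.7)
The approach has three stages: local solvability, symmetry, and global extension via a priori bounds. First, I would write the ODE in normal form $r''=F(y,r,r')$ with
\[
F(y,r,r')=(1+r'^2)\Big[\tfrac{n-1}{r}-\tfrac{1+r'^2}{\lambda(r-yr')}\Big],
\]
smooth on the open set $\Omega:=\{(y,r,r')\in\R^3:r>0,\ r-yr'>0\}$. Since $(0,\mu,0)\in\Omega$, Picard--Lindel\"of yields a unique $C^2$ local solution on some interval $(-\delta,\delta)$. Evaluating the ODE at $y=0$ with $r(0)=\mu,\ r'(0)=0$ immediately gives $r''(0)=(n-1-1/\lambda)/\mu>0$, establishing \eqref{r-2nd-derivative-x=0}. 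Evenness follows from the invariance of the ODE under $y\mapsto-y$: the function $\tilde r(y):=r(-y)$ satisfies the same IVP, so local uniqueness forces $\tilde r=r$.

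The main work is to extend to all of $\R$ while preserving $r-yr'>0$. By evenness it suffices to work on $[0,\infty)$; let $[0,Y)$ be the maximal interval on which the solution exists and stays in $\Omega$, and assume for contradiction that $Y<\infty$. I would establish three a priori bounds on $[0,Y)$. First, $r'>0$ on $(0,Y)$: at a putative first zero $y_1>0$ of $r'$ one would need $r''(y_1)\le 0$, but plugging $r'(y_1)=0$ into the ODE yields $r''(y_1)=(n-1-1/\lambda)/r(y_1)>0$, a contradiction; hence $r\ge\mu$. Second, a bound on $r'$: at any point with $r''\ge 0$, $y\ge 0$, $r'\ge 0$, the ODE rearranges to
\[
r'^2+\tfrac{\lambda(n-1)\,y\,r'}{r}\le\lambda(n-1)-1,
\]
and a continuity argument (if $y_c>0$ were the first instant with $r'(y_c)=M:=\sqrt{\lambda(n-1)-1}$, the mean value theorem produces $\xi_n\uparrow y_c$ with $r''(\xi_n)>0$, and passing to the limit in the displayed inequality forces $y_c=0$) upgrades this to $r'(y)<M$ throughout $(0,Y)$; consequently $r(y)\le\mu+My$ stays bounded.

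The third and most delicate bound is $f(y):=r(y)-yr'(y)>0$. The mechanism is self-correcting: as $f$ shrinks, the second term in the ODE dominates and drives $r''\to-\infty$, and via $f'=-yr''$ this pushes $f$ back up before it can reach $0$. Concretely, using $r\ge\mu$, one checks that whenever $f\le\delta_0:=\mu/(2\lambda(n-1))$ the ODE gives $r''\le -1/(2\lambda f)$, hence $(f^2)'=-2yfr''\ge y/\lambda$. If $f(y_*)=0$ were to occur at a first $y_*>0$, pick $y_1\in(0,y_*)$ with $f(y_1)=\delta_0$ and $f<\delta_0$ on $(y_1,y_*)$; integrating $(f^2)'\ge y_1/\lambda$ over $(y_1,y_*)$ yields $-\delta_0^2\ge y_1(y_*-y_1)/\lambda>0$, a contradiction. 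The three bounds together keep $(y,r(y),r'(y))$ inside a compact subset of $\Omega$ as $y\uparrow Y$, so the solution extends past $Y$, contradicting $Y<\infty$. Hence $Y=\infty$, and the even reflection furnishes the global solution on $\R$. The principal obstacle is this final lower bound on $f$: it does not come from any simple barrier, but must exploit the singular feedback of the second term of the ODE itself, combined crucially with the upper bound $r'<M$ that keeps the first term of the ODE innocuous.
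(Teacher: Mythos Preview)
Your proposal is correct and follows the paper's overall architecture exactly: local existence, even reflection via the symmetry $y\mapsto -y$, and global extension by showing the trajectory stays in a compact subset of $\Omega$ via the three a~priori bounds $r'>0$, $r'$ bounded above, and $r-yr'$ bounded below. Your proof of $r'>0$ is identical to the paper's Lemma~2.3, and your lower bound on $f=r-yr'$ is the same self-correcting mechanism as the paper's Lemma~2.4 (the paper writes $w'>0$ when $w$ is small, you write $(f^2)'\ge y/\lambda$; these are the same computation). One minor imprecision: for the final compactness step you need $f\ge\delta_0$ uniformly, not merely $f>0$; but your argument already gives this, since the inequality $(f^2)'\ge y_1/\lambda>0$ on any interval where $f\le\delta_0$ prevents $f$ from ever dropping below $\delta_0$ at all.

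The one genuine difference is your upper bound on $r'$. The paper (Lemma~2.5) integrates the crude inequality $r''/(1+r'^2)\le (n-1)/\mu$ to obtain $\arctan r'(y)\le (n-1)y_0/\mu$, giving a bound that depends on the assumed finite endpoint $y_0$. You instead observe that at any point with $r''\ge 0$ the ODE forces $r'^2+\lambda(n-1)yr'/r\le \lambda(n-1)-1$, and then use a first-touching argument to get the uniform bound $r'<M:=\sqrt{\lambda(n-1)-1}$ independent of $y_0$. Your bound is sharper and more robust: it is exactly the asymptotic ceiling one expects, and it avoids the awkwardness in the paper's version when $(n-1)y_0/\mu\ge\pi/2$, where $\tan((n-1)y_0/\mu)$ ceases to give information directly. (Your closing remark that the $r'<M$ bound is what ``keeps the first term of the ODE innocuous'' in the $f$-argument is slightly off---that step only uses $r\ge\mu$; the $r'$ bound is needed rather to keep $r$ and $r'$ themselves inside a compact set.)
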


\begin{thm}\label{asymptotic-behaviour-thm}(cf. Theorem 20 of \cite{DLW})
Let $n\ge 2$, $\lambda>\frac{1}{n-1}$, $\mu>0$, and $r(y)\in C^2(\R)$ be the unique solution of \eqref{ode-ivp}. Then 
\begin{equation}\label{r-monotone-increase}
r'(y)>0\quad\forall y>0,
\end{equation}
\begin{equation}\label{r'-infty-+ve}
a_1:=\lim_{y\to\infty}r'(y)\quad\mbox{ exists and }0\le a_1<\infty,
\end{equation}
and 
\begin{equation}\label{r-infty-behaviour}
\lim_{y\to\pm\infty}r(y)=\infty.
\end{equation}
Moreover there exists a constant $y_1>0$ such that
\begin{equation}\label{r''-sign}
\left\{\begin{aligned}
&r''(y)>0\quad\forall 0<y<y_1\\
&r''(y)<0\quad\forall y>y_1\\ 
&r''(y_1)=0.\end{aligned}\right.
\end{equation}
\end{thm}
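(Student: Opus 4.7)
The plan is to reduce the sign analysis of $r''$ to an auxiliary scalar function whose derivative is tractable along solutions of \eqref{r-eqn}, and then to extract the asymptotic limits from the resulting monotonicity together with the structure inequality \eqref{f-structure-ineqn}.

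To prove \eqref{r-monotone-increase} I would argue by contradiction. Since \eqref{r-2nd-derivative-x=0} and the hypothesis $\lambda>1/(n-1)$ give $r''(0)>0$, $r'$ is positive on some right neighborhood of $0$. If there were a first $y_0>0$ with $r'(y_0)=0$, plugging into \eqref{r-eqn} at $y_0$ yields $r''(y_0)=(n-1-1/\lambda)/r(y_0)>0$, which is incompatible with $r'$ vanishing from above.

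For \eqref{r''-sign} I would introduce the auxiliary function $h(y):=\lambda(n-1)(r(y)-yr'(y))-r(y)(1+r'(y)^2)$, so that \eqref{r-eqn} shows $r''(y)$ and $h(y)$ have the same sign, with $h(0)=\mu(\lambda(n-1)-1)>0$. A direct calculation gives $h'(y)=-r'(y)(1+r'(y)^2)-r''(y)[\lambda(n-1)y+2r(y)r'(y)]$. On any interval where $r''\ge 0$ and $y>0$, the bracketed factor is nonnegative, hence $h'(y)\le -r'(y)(1+r'(y)^2)$. If $r''>0$ held on all of $(0,\infty)$, then $r'$ would be strictly increasing and $r'(y)\ge r'(1)>0$ for $y\ge 1$, forcing $h(y)\to-\infty$ and contradicting $h>0$. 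Therefore $r''$ has a first zero $y_1>0$. At this $y_1$ the formula yields $h'(y_1)=-r'(y_1)(1+r'(y_1)^2)<0$, so $h$ (hence $r''$) becomes negative just past $y_1$; and if $h$ were to return to $0$ at some $y_2>y_1$, the same formula gives $h'(y_2)<0$, contradicting $h$ transitioning from negative to $0$. This establishes \eqref{r''-sign}.

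For the asymptotics, on $(y_1,\infty)$ the function $r'$ is strictly decreasing and, by \eqref{r-monotone-increase}, positive; hence $a_1:=\lim_{y\to\infty}r'(y)\in[0,r'(y_1))$ exists, giving \eqref{r'-infty-+ve}. For \eqref{r-infty-behaviour} I would set $P(y):=r(y)-yr'(y)$, which is positive by \eqref{f-structure-ineqn} and satisfies $P'(y)=-yr''(y)>0$ on $(y_1,\infty)$. Suppose for contradiction that $r(y)\to L<\infty$; then $P$ is bounded above by $L$ and increasing, hence $P\to P_\infty\in(0,L]$, and $yr'(y)=r(y)-P(y)\to L-P_\infty\ge 0$. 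If $L-P_\infty>0$, then $r'(y)\ge (L-P_\infty)/(2y)$ for $y$ large, whose integral on $(y_1,\infty)$ diverges, contradicting $r\to L$. If $L-P_\infty=0$, passing to the limit in \eqref{r-eqn} yields $\lim_{y\to\infty}r''(y)=(n-1-1/\lambda)/L>0$, contradicting $r''<0$ on $(y_1,\infty)$. Hence $r(y)\to\infty$ as $y\to\infty$, and evenness of $r$ from Theorem \ref{existence_soln-thm} extends this to $y\to-\infty$. I expect the bounded-$r$ dichotomy to be the main technical step, where the structure inequality \eqref{f-structure-ineqn} and the derivative identity $P'=-yr''$ both play essential roles.
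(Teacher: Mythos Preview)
Your proof is correct. For \eqref{r-monotone-increase}, \eqref{r'-infty-+ve}, and \eqref{r-infty-behaviour} your arguments are essentially those of the paper: the same first-zero contradiction for $r'$, the same eventual monotonicity of $r'$ once \eqref{r''-sign} is known, and the same dichotomy on $\lim yr'(y)$ (via $P=r-yr'$, which the paper also uses implicitly through $a_3=\lim(r-yr')$) to rule out a finite limit for $r$.

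The genuine difference is in the proof of \eqref{r''-sign}. The paper first shows (Lemma~2.6) that either $r''>0$ on all of $(0,\infty)$ or the sign pattern \eqref{r''-sign} holds, by differentiating \eqref{r-eqn} and checking that $r'''<0$ whenever $r''=0$; it then rules out $r''>0$ everywhere by a separate two-case analysis ($a_1=\infty$ via a direct inequality, $a_1<\infty$ via l'Hospital to force $yr'/r\to 1$ and hence $r\,r''\to-\infty$). Your auxiliary function $h=\lambda(n-1)(r-yr')-r(1+r'^2)$ packages both steps at once: the identity $h'=-r'(1+r'^2)-r''[\lambda(n-1)y+2rr']$ simultaneously gives $h'<0$ at every zero of $h$ (replacing the $r'''$ computation) and, on any interval with $r''\ge 0$, the integrable-from-below bound $h'\le -r'(1+r'^2)$ that forces $h\to-\infty$ if $r''>0$ persisted (replacing the $a_1$ case split and l'Hospital). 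Your route is somewhat shorter and avoids the limit gymnastics; the paper's route is more elementary in that it never introduces an auxiliary quantity, working directly with $r''$ and $r'''$.
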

Since \eqref{ode-ivp} is invariant under reflection $y\to -y$, by uniqueness of solution of ODE the solution of \eqref{ode-ivp} is an even function and Theorem \ref{existence_soln-thm} is equivalent to the following theorem.

\begin{thm}\label{existence_soln-thm2}
For any $n\ge 2$, $\lambda>\frac{1}{n-1}$ and $\mu>0$, there exists a unique solution $r(y)\in C^2([0,\infty))$ of the equation 
\begin{equation}\label{ode-ivp2}
\left\{\begin{aligned}
&\frac{r''}{1+r'{}^2}=\frac{n-1}{r}-\frac{1+r'{}^2}{\lambda(r-yr')}\,\,\,,\quad r(y)>0,\quad\forall y>0\\
&r(0)=\mu,\quad r'(0)=0\end{aligned}\right.
\end{equation}
which satisfies
\begin{equation}\label{f-structure-ineqn2}
r(y)>yr'(y)\quad\forall y>0
\end{equation}
and \eqref{r-2nd-derivative-x=0}.
\end{thm}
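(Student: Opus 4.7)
I would set up \eqref{ode-ivp2} as a standard initial value problem and handle its two non-standard features separately: the right-hand side is singular along $\{r=yr'\}$, so the structure inequality $F(y):=r(y)-yr'(y)>0$ must be maintained throughout, and global existence on $[0,\infty)$ requires \emph{a priori} bounds. First I would observe that
$G(y,r,p):=(1+p^2)\bigl[(n-1)/r-(1+p^2)/(\lambda(r-yp))\bigr]$
is $C^\infty$ in a neighbourhood of $(0,\mu,0)$ since $\mu>0$, so Picard--Lindel\"of yields a unique local $C^2$ solution on some $[0,\delta)$. Substituting the initial data into \eqref{ode-ivp2} gives \eqref{r-2nd-derivative-x=0} immediately, and the hypothesis $\lambda>1/(n-1)$ ensures $r''(0)>0$. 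Let $[0,T)$ denote the maximal forward interval of smooth existence; the task is to prove $T=\infty$.

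Next I would establish $r'>0$ and \eqref{f-structure-ineqn2} on $[0,T)$. For the first, if $r'(y_0)=0$ at some first $y_0>0$, then $r''(y_0)\le 0$, yet evaluating the ODE at $(y_0,r(y_0),0)$ gives $r''(y_0)=(n-1-1/\lambda)/r(y_0)>0$, a contradiction. Hence $r'>0$ on $(0,T)$ and $r\ge\mu$. For the structure inequality, differentiate to obtain $F'(y)=-yr''(y)$. If $F\to 0^+$ as $y\to y_*^-$ for some $y_*\le T$, then $(1+r'^2)/(\lambda F)$ in the ODE blows up, forcing $r''(y)\to-\infty$ and hence $F'(y)\to+\infty$. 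This would make $F$ strictly increasing on a left neighbourhood of $y_*$, contradicting $F(y_*^-)=0<F(y_*-\eta)$. Therefore $F>0$ throughout $[0,T)$.

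The main quantitative step, which I expect to be the principal obstacle, is a uniform bound on $r'$. Here I would exploit the sign of the second term in $G$: set $c:=\sqrt{\lambda(n-1)-1}>0$; whenever $r'(y)=c$ one has $1+r'^2=\lambda(n-1)$, so the ODE collapses to $r''=\lambda(n-1)^2\bigl[1/r-1/F\bigr]<0$ since $F=r-yc<r$. Consequently, if $r'$ first attains the value $c$ at some $y_0\in(0,T)$ one would need $r''(y_0)\ge 0$, contradicting the above; hence $r'(y)<c$ on $(0,T)$ and then $r(y)\le\mu+cy$. Combined with $F$ being bounded away from $0$ on each compact subinterval of $[0,T)$ (by the blow-up reasoning of the previous step, and by analysis at interior critical points of $F$, where $r''=0$ forces $F\ge r/(\lambda(n-1))\ge\mu/(\lambda(n-1))$), the trajectory $(y,r(y),r'(y))$ remains in a compact subset of the smoothness domain of $G$ on any bounded $[0,Y]\subset[0,T)$. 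Standard ODE continuation then forces $T=\infty$, and uniqueness follows from Picard--Lindel\"of applied on each piece of the extended interval.
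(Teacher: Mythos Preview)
Your approach is essentially correct and, in one respect, cleaner than the paper's. The overall architecture---local Picard existence, $r'>0$, a lower bound for $F=r-yr'$, an upper bound for $r'$, then ODE continuation---matches the paper's sequence of lemmas. The genuine difference is your derivative bound: you use a barrier at $c=\sqrt{\lambda(n-1)-1}$, observing that whenever $r'(y_0)=c$ one has $1+r'^2=\lambda(n-1)$ and hence
\[
r''(y_0)=\lambda(n-1)^2\Bigl(\tfrac{1}{r(y_0)}-\tfrac{1}{F(y_0)}\Bigr)<0,
\]
which blocks a first upcrossing. This yields a \emph{universal} bound $r'<c$ on the whole existence interval. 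The paper instead integrates $r''/(1+r'^2)\le(n-1)/\mu$ to get $\arctan r'(y)\le(n-1)y/\mu$, producing a bound that depends on the length $y_0$ of the assumed maximal interval (and is only informative when $(n-1)y_0/\mu<\pi/2$). Your barrier argument is both simpler and immune to that limitation.

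One point to tighten: your Step~3 ``blow-up'' argument for $F$ is informal. As written it presumes $\lim_{y\to y_*^-}F(y)$ exists and that $r'$ is already bounded (which you only prove afterwards), and its stated conclusion ``$F>0$ on $[0,T)$'' is automatic from the domain of the ODE; what is needed is a uniform positive lower bound as $y\to T^-$. Your own critical-point remark in the last paragraph is the right tool and makes the blow-up heuristic unnecessary: whenever $F(y)<\mu/(\lambda(n-1))$ one has $F(y)<r(y)/(\lambda(n-1))$, hence $(n-1)/r<(1+r'^2)/(\lambda F)$, hence $r''<0$ and $F'=-yr''>0$. A first-crossing argument then gives $F\ge\mu/(\lambda(n-1))$ on all of $[0,T)$, independently of the $r'$ bound. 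This is exactly the mechanism behind the paper's Lemma~2.4, stated more directly. With that fix, your continuation step goes through cleanly: on $[0,T)$ with $T<\infty$ one has $r\in[\mu,\mu+cT]$, $r'\in[0,c]$, $F\ge\mu/(\lambda(n-1))$, so the trajectory stays in a compact subset of the smoothness domain of $G$, forcing $T=\infty$.
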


\section{Existence and asymptotic behaviour of solution}
\setcounter{equation}{0}
\setcounter{thm}{0} 

In this section we willl prove Theorem \ref{asymptotic-behaviour-thm} and Theorem \ref{existence_soln-thm2}. We first start with a lemma.

\begin{lem}\label{short-time-existence-ode}
For any $n\ge 2$, $\lambda\ne 0$ and $\mu>0$, there exists a constant $y_0>0$ such that  the equation 
\begin{equation}\label{ode-ivp3}
\left\{\begin{aligned}
&\frac{r''}{1+r'{}^2}=\frac{n-1}{r}-\frac{1+r'{}^2}{\lambda(r-yr')}\,\,\,,\quad r(y)>0,\quad\mbox{ in }[0,y_0)\\
&r(0)=\mu,\quad r'(0)=0\end{aligned}\right.
\end{equation}
has a unique solution $r(y)\in C^2([0,y_0))$ which satisfies
\begin{equation}\label{f-structure-ineqn3}
r(y)>yr'(y)\quad\mbox{ in }[0,y_0)
\end{equation}
Moreover \eqref{r-2nd-derivative-x=0} holds.
\end{lem}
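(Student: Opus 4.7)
The plan is to observe that, despite appearances, the ODE in \eqref{ode-ivp3} is completely regular at the initial point, so the lemma is essentially a direct application of Picard--Lindel\"of. Solving \eqref{ode-ivp3} for $r''$ gives $r'' = F(y,r,r')$, where
\begin{equation*}
F(y,r,p) = (1+p^2)\left[\frac{n-1}{r} - \frac{1+p^2}{\lambda(r-yp)}\right].
\end{equation*}
This $F$ is $C^\infty$ (hence locally Lipschitz in $(r,p)$) on the open set
\begin{equation*}
U = \{(y,r,p)\in\R^3 : r>0,\ r-yp>0\},
\end{equation*}
and at the initial configuration $(y,r,p)=(0,\mu,0)$ one has $r=\mu>0$ and $r-yp=\mu>0$, so $(0,\mu,0)\in U$.

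First I would convert \eqref{ode-ivp3} into the first-order system $u'=(u_2,F(y,u_1,u_2))$ with $u(0)=(\mu,0)$ and invoke the classical Picard--Lindel\"of theorem on a small cylinder contained in $U$. This produces a unique $C^1$ solution $(r,r')$ on some interval $[0,y_0')$. Since $F$ is smooth and $r,r'\in C^1$, the relation $r''=F(y,r,r')$ automatically upgrades the solution to $r\in C^2([0,y_0'))$.

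Next I would secure the structural constraints. By continuity of $r$ and $r'$ and the strict inequalities $r(0)=\mu>0$ and $r(0)-0\cdot r'(0)=\mu>0$, one can choose $0<y_0\le y_0'$ small enough so that both $r(y)>0$ and $r(y)-yr'(y)>0$ on $[0,y_0)$; this is \eqref{f-structure-ineqn3} and also guarantees that the solution trajectory remains inside $U$ on the chosen interval, so the ODE is satisfied in the classical sense. Uniqueness is inherited from Picard--Lindel\"of. Finally, evaluating the ODE at $y=0$ with the given initial data immediately yields
\begin{equation*}
r''(0) = \frac{n-1}{\mu} - \frac{1}{\lambda\mu} = \left(n-1-\frac{1}{\lambda}\right)\frac{1}{\mu},
\end{equation*}
which is \eqref{r-2nd-derivative-x=0}.

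There is no real obstacle here: because the denominator $r-yr'$ equals $\mu>0$ at $y=0$, the singular-looking coefficient in \eqref{r-eqn} is a red herring for short-time existence. The only point requiring any care is making sure $y_0$ is taken small enough to keep the trajectory inside $U$, which is an elementary continuity argument. The genuine difficulties of the paper---global existence, preservation of $r(y)>yr'(y)$ for all $y\in\R$, and the asymptotic behaviour of Theorem \ref{asymptotic-behaviour-thm}---are precisely those issues that fail to be addressed by this short-time lemma and must be handled separately in the subsequent analysis.
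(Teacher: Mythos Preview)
Your proof is correct. The right-hand side $F(y,r,p)$ is smooth on the open set $U=\{r>0,\ r-yp>0\}$, the initial point $(0,\mu,0)$ lies in $U$, and hence Picard--Lindel\"of applies directly; the structural inequality \eqref{f-structure-ineqn3} and the positivity $r>0$ then follow from continuity after possibly shrinking the interval, and \eqref{r-2nd-derivative-x=0} is just the ODE evaluated at $y=0$.

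The paper takes a different but equivalent route: it writes out the Banach fixed-point argument explicitly, working on the space $\mathcal{X}_\varepsilon$ of pairs $(g,h)$ representing $(r,r')$, defining the integral operator $\Phi$ corresponding to the first-order reformulation, and verifying by hand that $\Phi$ maps a closed ball $\mathcal{D}_{\varepsilon,\eta}$ into itself and is a $\tfrac12$-contraction for $\varepsilon$ small. In substance this is nothing other than the proof of Picard--Lindel\"of specialized to this equation. Your approach is shorter and loses nothing mathematically; the only thing the paper's explicit calculation buys is that the same template (with obvious modifications to the center of the ball and the lower bounds on $g$ and $g-sh$) is reused verbatim for Lemma~\ref{existence-ode-extension-lem}, where the initial point is $(y_1,r_0,r_1)$ rather than $(0,\mu,0)$ and one wants the interval length $\delta_1$ to depend only on the a priori bounds $\delta_0$, $M_1$, $y_0$ and $\lambda$. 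That uniformity is also immediate from your viewpoint, since the local Lipschitz constants of $F$ on a fixed compact subset of $U$ are uniform.
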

\begin{proof}
Uniqueness of solution of \eqref{ode-ivp3} follows from standard ODE theory. Hence we only need to prove existence of solution of \eqref{ode-ivp3}. We will use a modification of the fixed point argument of the proof of Lemma 2.1 of \cite{H} to prove the existence of solution of \eqref{ode-ivp3}. Let $0<\ve<1$. We now define the Banach space 
$$
\cX_\ve:=\left\{(g,h): g, h\in C\left( [0,\ve]; \R\right) \right\}
$$ 
with a norm given by
$$||(g,h)||_{\cX_\ve}=\max\left\{\|g\|_{L^\infty(0, \ve)} ,\|h(s)\|_{L^{\infty}(0,\ve)}\right\}.$$  
For any $(g,h)\in \cX_\ve,$ we define  
$$\Phi(g,h):=\left(\Phi_1(g,h),\Phi_2(g,h)\right),$$ 
where for any $0<y\leq\ve,$
\begin{equation}\label{eq-existence-contraction-map}
\left\{\begin{aligned}
&\Phi_1(g,h)(y):=\mu+\int_0^y h(s)\,ds\\
&\Phi_2(g,h)(y):=\int_0^y (1+h(s)^2)\left(\frac{n-1}{g(s)}-\frac{1+h(s)^2}{\lambda (g(s)-sh(s))}\right)\,ds.
\end{aligned}\right.
\end{equation}
For any $0<\eta\le \mu/4$, let $$\cD_{\ve,\eta}:=\left\{ (g,h)\in \cX_\ve:  ||(g,h)-(\mu,0)||_{\cX_{\ve}}\leq \eta\right\}.$$
Note that $\cD_{\ve,\eta}$  is a closed subspace of $\cX_\ve$. We will show that if $\ve\in(0,1)$ is sufficiently small, the map $(g,h)\mapsto\Phi(g,h)$ will have  a unique  fixed point in $\cD_{\ve,\eta}$.

We first  prove that $\Phi(\cD_{\ve,\eta})\subset \cD_{\ve,\eta}$ if $\ve\in(0,1)$ is sufficiently  small.
Let $(g,h)\in \cD_{\ve,\eta}$. Then 
\begin{align}
&|h(s)|\le \eta\le \mu/4\quad\mbox{ and } \quad \frac{3\mu}{4}\le g(s)\le\frac{5\mu}{4}\quad\forall 0\le s\le\3\label{h-g-bd}\\
\Rightarrow\quad&g(s)-sh(s)\ge\frac{\mu}{2}>0\quad\forall 0\le s\le\3\label{h-g-lower-bd}
\end{align}
and
\begin{equation}\label{phi1-bd}
|\Phi_1(g,h)(y)-\mu|\le\int_0^y |h(s)|\,ds\le\eta\ve\le\eta\quad\forall 0\le y\le\3.
\end{equation}
Hence by \eqref{eq-existence-contraction-map}, \eqref{h-g-bd} and \eqref{h-g-lower-bd},
\begin{equation}\label{phi2-bd}
|\Phi_2(g,h)(y)|\le (1+\eta^2)\left(\frac{4(n-1)}{3\mu}+\frac{2(1+\eta^2)}{|\lambda|\mu}\right)\3\le\eta\quad\forall 0\le y\le\3
\end{equation}
if $0<\3\le\3_1$ where
\begin{equation*}
\3_1=\min\left(\frac{1}{2},\eta(1+\eta^2)^{-1}\left(\frac{4(n-1)}{3\mu}+\frac{2(1+\eta^2)}{|\lambda|\mu}\right)^{-1}\right).
\end{equation*}
Thus  by \eqref{phi1-bd} and \eqref{phi2-bd}, $\Phi(\cD_{\ve,\eta})\subset \cD_{\ve,\eta}$ for any  $0<\ve\le\ve_1$.

We now let $0<\ve\le\ve_1$. Let $(g_1,h_1),(g_2,h_2)\in \cD_{\ve,\eta}$ and $\delta:=||(g_1,h_1)-(g_2,h_2)||_{\cX_\ve}$. Then by \eqref{h-g-bd} and \eqref{h-g-lower-bd},
\begin{equation}\label{hi-gi-bd}
|h_i(s)|\le \eta \le \mu/4, \, \frac{3\mu}{4}\le g_i(s)\le\frac{5\mu}{4}\,\mbox{ and }\, g_i(s)-sh_i(s)\ge\frac{\mu}{2}>0\,\,\forall 0\le s\le\3, i=1,2.
\end{equation}
Hence by \eqref{hi-gi-bd}, we have
\begin{equation}\label{phi1-contraction}
|\Phi_1(g_1,h_1)(y)-\Phi_1(g_2,h_2)(y)|\le\int_0^y |h_1(s)-h_2(s)|\,ds\le\delta\3 \le\frac{\delta}{2}\quad\forall 0\le y\le\3
\end{equation}
and
\begin{align}\label{phi2-difference}
&|\Phi_2(g_1,h_1)(y)-\Phi_2(g_2,h_2)(y)|\notag\\
\le&\int_0^y\left|h_1(s)-h_2(s)\right|\left|h_1(s)+h_2(s)\right|\left(\left|\frac{n-1}{g_1(s)}\right|+\left|\frac{1+h_1(s)^2}{\lambda (g_1(s)-sh_1(s))}\right|\right)\,ds\notag\\
&\qquad +(n-1)\int_0^y(1+h_2(s)^2)\left|\frac{g_2(s)-g_1(s)}{g_1(s)g_2(s)}\right|\,ds\notag\\
&\qquad +\int_0^y(1+h_2(s)^2)\left|\frac{1+h_1(s)^2}{\lambda (g_1(s)-sh_1(s))}-\frac{1+h_2(s)^2}{\lambda (g_2(s)-sh_2(s))}\right|\,ds\quad\forall 0\le y\le\3\notag\\
\le&c_2\delta\3+(1+\eta^2)\int_0^r\left|\frac{1+h_1(s)^2}{\lambda (g_1(s)-sh_1(s))}-\frac{1+h_2(s)^2}{\lambda (g_2(s)-sh_2(s))}\right|\,ds\quad\forall 0\le y\le\3
\end{align}
where
\begin{equation*}
c_2=2\eta\left(\frac{4(n-1)}{3\mu}+\frac{2(1+\eta^2)}{|\lambda|\mu}\right)+\frac{16(n-1)(1+\eta^2)}{9\mu^2}.
\end{equation*}
Now by \eqref{hi-gi-bd},
\begin{align}\label{h-g-bd20}
&\left|\frac{1+h_1(s)^2}{g_1(s)-sh_1(s)}-\frac{1+h_2(s)^2}{g_2(s)-sh_2(s)}\right|\notag\\
\le &4\frac{\left|(1+h_1(s)^2)(g_2(s)-sh_2(s))-(1+h_2(s)^2)(g_1(s)-sh_1(s))\right|}{\mu^2}\quad\forall 0\le s\le\3
\end{align}
and
\begin{align}\label{h-g-bd21}
&\left|(1+h_1(s)^2)(g_2(s)-sh_2(s))-(1+h_2(s)^2)(g_1(s)-sh_1(s))\right|\notag\\
\le&|h_1(s)-h_2(s)||h_1(s)+h_2(s)||g_2(s)-sh_2(s)|+(1+h_2(s)^2)|g_2(s)-g_1(s)+sh_1(s)-sh_2(s)|\notag\\
\le&c_3\delta\quad\forall 0\le s\le\3
\end{align}
\begin{equation*}
c_3=2\left(\eta\left(\frac{5\mu}{4}+\eta\right)+1+\eta^2\right).
\end{equation*}
By \eqref{phi2-difference}, \eqref{h-g-bd20} and \eqref{h-g-bd21},
\begin{equation}\label{phi2-contraction}
|\Phi_2(g_1,h_1)(y)-\Phi_2(g_2,h_2)(y)|\le\left(c_2+\frac{4(1+\eta^2)c_3}{|\lambda|\mu^2}\right)\delta\3\quad\forall 0\le y\le\3.
\end{equation}
We now let 
\begin{equation*}
\ve_2=\min\left(\ve_1,\frac{1}{2}\left(c_2+\frac{4(1+\eta^2)c_3}{|\lambda|\mu^2}\right)^{-1}\right)
\end{equation*}
and $0<\ve\le\ve_2$.
Then by \eqref{phi1-contraction} and \eqref{phi2-contraction},
\begin{equation*}
\|\Phi(g_1,h_1)-\Phi(g_2,h_2)\|_{\cX_\ve}\le\frac{1}{2}\|(g_1,h_1)-(g_2,h_2)\|_{\cX_\ve}\quad\forall (g_1,h_1),(g_2,h_2)\in \cD_{\ve,\eta}.
\end{equation*}
Hence $\Phi$ is a contraction map on $\cD_{\ve,\eta}$. Then by the Banach fix point theorem the map $\Phi$ has a unique fix point. Let $(g,h)\in \cD_{\ve,\eta}$ be the unique fixed point of the map $\Phi$. Then
\begin{equation*}
\Phi(g,h)=(g,h).
\end{equation*}
Hence 
\begin{equation}\label{g-eqn}
g(y)=\mu+\int_0^y h(s)\,ds\quad\Rightarrow\quad g'(y)=h(y)\quad\forall 0<y<\3\quad
\mbox{ and }\quad g(0)=\mu
\end{equation}
and
\begin{align}\label{h-eqn}
&h(y)=\int_0^y (1+h(s)^2)\left(\frac{n-1}{g(s)}-\frac{1+h(s)^2}{\lambda (g(s)-sh(s))}\right)\,ds\quad\forall 0<y<\3\notag\\
\Rightarrow\quad&h'(y)=(1+h(y)^2)\left(\frac{n-1}{g(y)}-\frac{1+h(y)^2}{\lambda (g(y)-sh(y))}\right)\quad\forall 0<y<\3.
\end{align}
By \eqref{h-g-lower-bd}, \eqref{g-eqn} and \eqref{h-eqn}, $g\in C^1([0,\3))\cap C^2(0,\3)$ satisfies 
\eqref{ode-ivp3} and \eqref{f-structure-ineqn3} with $y_0=\3$. Letting $y\to 0$ in \eqref{ode-ivp3} we get
\eqref{r-2nd-derivative-x=0} and hence $g\in C^2([0,\3))$ and the lemma follows.
\end{proof}

By an argument similar to the proof of Lemma \ref{short-time-existence-ode} we have the following lemma.

\begin{lem}\label{existence-ode-extension-lem}
For any $n\ge 2$, $\lambda\ne 0$, $\mu>0$, $M_1>0$, $\delta_0>0$, $r_0,r_1\in\R$, satisfying
\begin{equation*}\label{denominator-lower-bd}
\delta_0\le r_0\le M_1,\quad|r_1|\le M_1,\quad r_0-y_1r_1\ge\delta_0,
\end{equation*}
there exists a constant $\delta_1\in (0,y_0/2)$ depending on $\lambda$, $\delta_0$, $y_0$ and $M_1$ such that  for any $y_0/2<y_1<y_0$ the equation 
\begin{equation}\label{ode-ivp4}
\left\{\begin{aligned}
&\frac{r''}{1+r'{}^2}=\frac{n-1}{r}-\frac{1+r'{}^2}{\lambda(r-yr')}\,\,\,,\quad r(y)>0,\quad\mbox{ in }[y_1,y_1+\delta_1)\\
&r(y_1)=r_0,\quad r'(y_1)=r_1\end{aligned}\right.
\end{equation}
has a unique solution $r(y)\in C^2([y_1,y_1+\delta_1))$ which satisfies
\begin{equation}\label{f-structure-ineqn4}
r(y)>yr'(y)\quad\mbox{ in }[y_1,y_1+\delta_1).
\end{equation}
\end{lem}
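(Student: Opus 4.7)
The plan is to mirror the Banach fixed point argument of Lemma \ref{short-time-existence-ode}, but with base point $y_1$ and initial data $(r_0,r_1)$ in place of $(\mu,0)$. Introduce the Banach space
$\cX_{\delta_1} := \{(g,h) : g,h \in C([y_1, y_1+\delta_1]; \R)\}$
with the sup norm, and the map $\Phi = (\Phi_1, \Phi_2)$ given by
$$\Phi_1(g,h)(y) = r_0 + \int_{y_1}^y h(s)\,ds, \qquad \Phi_2(g,h)(y) = r_1 + \int_{y_1}^y (1+h(s)^2)\left(\frac{n-1}{g(s)} - \frac{1+h(s)^2}{\lambda(g(s)-sh(s))}\right) ds,$$
and work on the closed set $\cD_{\delta_1,\eta} := \{(g,h) \in \cX_{\delta_1} : \|(g,h)-(r_0,r_1)\|_{\cX_{\delta_1}} \le \eta\}$ for $\eta \in (0, \delta_0/4]$ to be chosen.

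The only substantive difference from Lemma \ref{short-time-existence-ode} is that we are no longer starting at $y=0$, so we must secure uniform positivity of both $g$ and the denominator $g(s) - sh(s)$ from the data $\delta_0, M_1, y_0$ alone. For $(g,h) \in \cD_{\delta_1,\eta}$ and $s \in [y_1, y_1+\delta_1]$, I would write
$$g(s) - sh(s) = (r_0 - y_1 r_1) - (s - y_1) r_1 + (g(s) - r_0) - s(h(s) - r_1) \ge \delta_0 - \delta_1 M_1 - \eta - (y_0+\delta_1)\eta,$$
together with $g(s) \ge r_0 - \eta \ge \delta_0 - \eta$. Fixing $\eta = \delta_0/4$ and then requiring $\delta_1$ small enough in terms of $\delta_0, M_1, y_0$, we get $g(s) \ge \delta_0/2$ and $g(s) - sh(s) \ge \delta_0/2$ throughout. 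The integrand of $\Phi_2$ is then bounded by a constant $C = C(n, \lambda, \delta_0, M_1)$, so $\|\Phi(g,h) - (r_0,r_1)\|_{\cX_{\delta_1}} \le \max(M_1, C)\delta_1 \le \eta$ after a further shrinking of $\delta_1$, giving $\Phi(\cD_{\delta_1,\eta}) \subset \cD_{\delta_1,\eta}$.

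For the contraction estimate I would copy the splitting used in \eqref{phi2-difference}--\eqref{h-g-bd21}: each of the differences $h_1-h_2$, $g_1 - g_2$, and $(1+h_1^2)(g_2-sh_2) - (1+h_2^2)(g_1 - sh_1)$ is controlled by $\|(g_1,h_1) - (g_2,h_2)\|_{\cX_{\delta_1}}$, while the denominator factors $g_i$ and $g_i - sh_i$ are uniformly bounded below by $\delta_0/2$ by the previous step. Choosing $\delta_1$ smaller if needed we arrive at $\|\Phi(g_1,h_1) - \Phi(g_2,h_2)\|_{\cX_{\delta_1}} \le \tfrac12 \|(g_1,h_1) - (g_2,h_2)\|_{\cX_{\delta_1}}$, and all constants depend only on $\lambda, \delta_0, M_1, y_0$. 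Banach's theorem produces a unique fixed point $(g,h)$; from the fixed point identities $g'(y) = h(y)$ and $h'(y) = (1+h^2)(\tfrac{n-1}{g} - \tfrac{1+h^2}{\lambda(g-yh)})$, so $r(y) := g(y) \in C^2([y_1, y_1+\delta_1))$ solves \eqref{ode-ivp4}, and the built-in bound $g(s) - sh(s) \ge \delta_0/2$ yields \eqref{f-structure-ineqn4}.

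The main obstacle is purely bookkeeping: $\delta_1$ must be chosen to depend only on $\lambda, \delta_0, y_0, M_1$, independent of the specific $y_1 \in (y_0/2, y_0)$ and admissible $(r_0, r_1)$. This forces one to estimate every constant appearing in the invariance and contraction steps through these four parameters alone, which is straightforward once the uniform lower bound $g - sh \ge \delta_0/2$ is established. Beyond this, the argument is a routine transcription of the proof of Lemma \ref{short-time-existence-ode}.
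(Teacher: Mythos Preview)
Your approach is exactly what the paper intends: the paper gives no independent proof of this lemma, only the remark ``By an argument similar to the proof of Lemma~\ref{short-time-existence-ode},'' and your fixed-point scheme centered at $(r_0,r_1)$ is precisely that transcription. One small bookkeeping slip: with $\eta=\delta_0/4$ the term $(y_0+\delta_1)\eta$ in your lower bound for $g(s)-sh(s)$ is at least $y_0\delta_0/4$, which already exceeds $\delta_0/2$ once $y_0>2$, so no choice of $\delta_1$ rescues the inequality; simply let $\eta$ depend on $y_0$ as well, e.g.\ $\eta=\delta_0/(4(1+y_0))$, and the rest of your argument goes through verbatim with constants depending only on $\lambda,\delta_0,y_0,M_1$ as required.
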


\begin{lem}\label{r-monotone-lemma}
Let $n\ge 2$, $0<\lambda\ne\frac{1}{n-1}$, $\mu>0$ and $y_0>0$. Suppose $r(y)\in C^2([0,y_0))$ is the solution of \eqref{ode-ivp3} which satisfies \eqref{f-structure-ineqn3}. Then the following holds.
\begin{enumerate}

\item[(i)] If $\lambda>\frac{1}{n-1}$, then 
\begin{equation*}
r'(y)>0\quad\forall 0<y<y_0.
\end{equation*}

\item[(ii)]  If $0<\lambda<\frac{1}{n-1}$, then 
\begin{equation*}
r'(y)<0\quad\quad\forall 0<y<y_0.
\end{equation*}

\end{enumerate} 
\end{lem}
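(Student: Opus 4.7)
The plan is a first-zero argument driven by the explicit value of $r''(0)$ given by \eqref{r-2nd-derivative-x=0}. Since $r'(0)=0$ and
\[
r''(0)=\left(n-1-\frac{1}{\lambda}\right)\frac{1}{\mu},
\]
in case (i) the hypothesis $\lambda>1/(n-1)$ gives $r''(0)>0$, and a Taylor expansion at $y=0$ produces some $\delta>0$ with $r'(y)>0$ on $(0,\delta)$; symmetrically, in case (ii) one has $r''(0)<0$ and $r'(y)<0$ on a right neighbourhood of $0$.

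To propagate this sign all the way up to $y_0$ in case (i), I would set
\[
y^*:=\inf\{y\in(0,y_0):r'(y)=0\}
\]
and argue by contradiction, supposing $y^*<y_0$. Then by the previous step $y^*\ge\delta>0$, and by continuity of $r'$ we have $r'(y)>0$ for $y\in(0,y^*)$ with $r'(y^*)=0$. Evaluating \eqref{ode-ivp3} at $y=y^*$, and using both $r'(y^*)=0$ and the fact that \eqref{f-structure-ineqn3} forces $r(y^*)-y^*r'(y^*)=r(y^*)>0$, yields
\[
r''(y^*)=\frac{n-1}{r(y^*)}-\frac{1}{\lambda r(y^*)}=\frac{1}{r(y^*)}\left(n-1-\frac{1}{\lambda}\right)>0.
\]
On the other hand, because $r'>0$ on $(0,y^*)$ and $r'(y^*)=0$, the left-hand difference quotient $(r'(y^*)-r'(y))/(y^*-y)=-r'(y)/(y^*-y)$ is $\le 0$ for $y\in(0,y^*)$, so passing to the limit gives $r''(y^*)\le 0$, a contradiction. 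Hence no such $y^*$ exists and $r'(y)>0$ throughout $(0,y_0)$.

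Case (ii) is handled by the mirror-image argument: $r'<0$ on a right neighbourhood of $0$, and at a hypothetical first zero $y^*\in(\delta,y_0)$ the ODE gives $r''(y^*)=(n-1-1/\lambda)/r(y^*)<0$, while the one-sided monotonicity of $r'$ at $y^*$ (where $r'<0$ on the left and $r'(y^*)=0$) forces $r''(y^*)\ge 0$, a contradiction. I do not anticipate any substantive obstacle: the argument is a clean sign-chase, and the only point to verify carefully is that at the candidate first zero the denominator $r-yr'$ does not degenerate, which is immediate from $r'(y^*)=0$ combined with the standing positivity of $r$ required by \eqref{ode-ivp3}.
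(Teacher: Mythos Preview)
Your argument is correct and follows essentially the same approach as the paper: use \eqref{r-2nd-derivative-x=0} to get the sign of $r'$ near $0$, then assume a first zero $y^*\in(0,y_0)$ and derive a contradiction by evaluating the ODE there to obtain $r''(y^*)=(n-1-1/\lambda)/r(y^*)$ with the ``wrong'' sign. The only cosmetic differences are that the paper phrases the extension via a maximal interval rather than an infimum, and simply asserts $r''(a_1)\le 0$ (resp.\ $\ge 0$) without writing out the difference-quotient justification you give.
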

\begin{proof}
By Lemma \ref{short-time-existence-ode}, \eqref{r-2nd-derivative-x=0} holds. We divide the proof into two cases:

\noindent\textbf{Case 1}: $\lambda>\frac{1}{n-1}$

\noindent By \eqref{r-2nd-derivative-x=0}, $r''(0)>0$. Hence there exists a constant $\delta>0$ such that $r'(s)>0$ for any $0<s<\delta$. Let $(0,a_1)$, $\delta\le a_1\le y_0$, be the maximal interval such that 
\begin{equation*}
r'(s)>0\quad\forall 0<s<a_1.
\end{equation*} 
Suppose $a_1<y_0$. Then $r'(a_1)=0$ and hence $r''(a_1)\le 0$. On the other hand by \eqref{ode-ivp3},
\begin{equation*}
r''(a_1)=\left(n-1-\frac{1}{\lambda}\right)\frac{1}{r(a_1)}>0
\end{equation*} 
and contradiction arises. Hence $a_1=y_0$ and (i) follows.

\noindent\textbf{Case 2}: $0<\lambda<\frac{1}{n-1}$

\noindent By \eqref{r-2nd-derivative-x=0}, $r''(0)<0$. Hence there exists  a constant $\delta>0$ such that $r'(s)<0$ for any $0<s<\delta$. Let $(0,a_1)$, $\delta\le a_1\le y_0$, be the maximal interval such that 
\begin{equation*}
r'(s)<0\quad\forall 0<s<a_1.
\end{equation*} 
Suppose $a_1<y_0$. Then $r'(a_1)=0$ and hence $r''(a_1)\ge 0$. On the other hand by \eqref{ode-ivp3},
\begin{equation*}
r''(a_1)=\left(n-1-\frac{1}{\lambda}\right)\frac{1}{r(a_1)}<0
\end{equation*} 
and contradiction arises. Hence $a_1=y_0$ and (ii) follows.
\end{proof}

\begin{lem}\label{r-monotone-lemma2}
Let $n\ge 2$, $\lambda>\frac{1}{n-1}$, $\mu>0$ and $y_0>0$. Suppose $r(y)\in C^2([0,y_0))$ is the solution of \eqref{ode-ivp3} which satisfies \eqref{f-structure-ineqn3}. Then there exist a constant $\delta_1>0$  such that
\begin{equation}\label{f-structure-ineqn5}
r(y)-yr'(y)\ge\delta_1\quad\forall 0<y<y_0.
\end{equation}
\end{lem}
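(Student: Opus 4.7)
The natural quantity to analyze is $f(y) := r(y) - y r'(y)$, whose positive lower bound is the content of the lemma. A direct differentiation gives the identity $f'(y) = -y\,r''(y)$, so on $(0,y_0)$ the sign of $f'$ is opposite to that of $r''$. First I would invoke Lemma \ref{r-monotone-lemma}(i) (applicable because $\lambda > \tfrac{1}{n-1}$) to deduce $r'(y) > 0$ on $(0,y_0)$, and therefore $r(y) \geq r(0) = \mu$ throughout $[0,y_0)$.

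The argument is by contradiction, exploiting the structure of the ODE: once $f$ becomes small, the singular term $-\tfrac{1+r'(y)^2}{\lambda f(y)}$ dominates the bounded term $\tfrac{n-1}{r(y)} \leq \tfrac{n-1}{\mu}$, so $r''$ turns strictly negative, $f'$ turns strictly positive, and $f$ is pushed back upward. To quantify this I would set
\begin{equation*}
\delta_1 := \frac{\mu}{2\lambda(n-1)},
\end{equation*}
noting $\delta_1 < \mu$ because $\lambda(n-1) > 1$. Assume for contradiction that $f(y) < \delta_1$ at some $y \in (0,y_0)$, and let $y^* := \inf\{y \in (0,y_0) : f(y) < \delta_1\}$. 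Continuity of $f$ together with $f(0) = \mu > \delta_1$ places $y^*$ strictly inside $(0,y_0)$, forces $f(y^*) = \delta_1$, and yields $f \geq \delta_1$ throughout $[0,y^*]$. Thus $y^*$ minimizes $f$ on $[0,y^*]$, so $f'(y^*) \leq 0$, equivalently $r''(y^*) \geq 0$.

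On the other hand, evaluating the ODE \eqref{ode-ivp3} at $y^*$ and using $r(y^*) \geq \mu$ along with $1+r'(y^*)^2 \geq 1$ gives
\begin{equation*}
r''(y^*) \leq (1 + r'(y^*)^2)\left(\frac{n-1}{\mu} - \frac{1}{\lambda \delta_1}\right) = -(1 + r'(y^*)^2)\,\frac{n-1}{\mu} < 0,
\end{equation*}
since $\tfrac{1}{\lambda\delta_1} = \tfrac{2(n-1)}{\mu}$ by the choice of $\delta_1$. This contradicts $r''(y^*) \geq 0$ and so $f \geq \delta_1$ on $[0,y_0)$, as required.

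I do not expect a genuine obstacle: the whole argument rests on the identity $f' = -y r''$ together with the fact that the ODE forces $r''$ to be strictly negative as soon as $f$ is below a threshold depending only on $\mu$, $\lambda$, and $n$. The only mildly delicate point is ensuring that the infimum $y^*$ is interior, which is guaranteed by the strict inequality $\delta_1 < f(0) = \mu$ and the continuity of $f$.
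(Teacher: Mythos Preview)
Your argument is correct and follows essentially the same mechanism as the paper: once $f(y)=r(y)-yr'(y)$ drops to the level $\tfrac{\mu}{2\lambda(n-1)}$, the ODE forces $r''<0$ (equivalently $f'=-yr''>0$), preventing $f$ from going any lower; the paper phrases this via a maximal interval on which $w'=f'>0$, while you phrase it via the first crossing point $y^*$, but the computation is the same. The only notable difference is that the paper sets its threshold to $\tfrac12\min\bigl(\min_{[0,y_0/2]}f,\,\tfrac{\mu}{\lambda(n-1)}\bigr)$, needlessly including the solution-dependent quantity $\min_{[0,y_0/2]}f$; your explicit choice $\delta_1=\tfrac{\mu}{2\lambda(n-1)}$ depends only on $\mu,\lambda,n$ and is slightly cleaner.
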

\begin{proof}
Let $w(y)=r(y)-yr'(y)$, $a_1=\min_{0\le y\le y_0/2}w(y)$, $a_2=\frac{\mu}{\lambda(n-1)}$ and $a_3=\frac{1}{2}\min (a_1,a_2)$. Then $a_1>0$ and $a_3>0$.  By Lemma \ref{r-monotone-lemma}, 
\begin{equation}\label{r-lower-bd}
r(y)\ge\mu\quad\forall 0<y<y_0.
\end{equation}
Suppose there exists 
$y_1\in (y_0/2,y_0)$ such that $w(y_1)<a_3$. Let $(a,b)$ be the maximal interval containing $y_1$ such that $w(y)<a_3$ for any $y\in (a,b)$. Then $a>y_0/2$, $w(a)=a_3$ and 
\begin{equation}\label{w-upper-bd}
w(y)<\frac{\mu}{2\lambda(n-1)}\quad\forall a<y<b.
\end{equation} 
By \eqref{ode-ivp3},  \eqref{r-lower-bd}, \eqref{w-upper-bd}, and a direct computation,
\begin{align*}
w'(y)=&y(1+r'(y)^2)\left(\frac{1+r'(y)^2}{\lambda w(y)}-\frac{n-1}{r(y)}\right)\quad\forall 0<y<y_0\notag\\
\ge&y(1+r'(y)^2)\left(\frac{1}{2\lambda w(y)}+\left(\frac{1}{2\lambda w(y)}-\frac{n-1}{\mu}\right)\right)\quad\forall a<y<b\notag\\
\ge&\frac{y_0}{4\lambda w(y)}>0\quad\forall a<y<b.\notag
\end{align*}
Hence 
\begin{equation*}
w(y)>w(a)=a_3\quad\forall a<y<b
\end{equation*}
and contradiction arises.  Hence no such $y_1$ exists and $w(y)\ge a_3$ for any $y\in (0,y_0)$. Thus \eqref{f-structure-ineqn5} holds with $\delta_1=a_3$.

\end{proof}

\begin{lem}\label{r'-upper-bd-lemma}
Let $n\ge 2$, $\lambda>\frac{1}{n-1}$, $\mu>0$ and $y_0>0$. Suppose $r(y)\in C^2([0,y_0))$ is the solution of \eqref{ode-ivp3} which satisfies \eqref{f-structure-ineqn3}. Then  there exists  a constant $M_1>0$  such that
\begin{equation}\label{r'-upper-bd-a}
0<r'(y)\le M_1\qquad\forall 0<y<y_0
\end{equation}
and
\begin{equation}\label{r-upper-bd2}
\mu\le r(y)\le \mu+M_1y_0\quad\forall 0<y<y_0.
\end{equation}
\end{lem}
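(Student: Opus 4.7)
The plan is a first-hit / barrier argument on $r'$, exploiting the fact that the term $-(1+r'{}^2)/\lambda(r-yr')$ in \eqref{ode-ivp3} overwhelms the $(n-1)/r$ term when $r'$ is large and drives $r''$ strictly negative. Since $\lambda > 1/(n-1)$, Lemma \ref{r-monotone-lemma}(i) immediately gives $r'(y) > 0$ on $(0,y_0)$, so $r(y)\ge\mu$ and the lower bound in \eqref{r'-upper-bd-a} is free; once the upper bound is established, \eqref{r-upper-bd2} follows at once by integration. Fix $M_1 > 0$ with $M_1^2 > (n-1)\lambda - 1$; such an $M_1$ exists because $(n-1)\lambda > 1$.

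Suppose for contradiction that $r'$ attains the value $M_1$ at some point of $(0,y_0)$, and let $y_M \in (0,y_0)$ be the first such point, so $r'(y_M) = M_1$ and $r'(y) < M_1$ on $[0,y_M)$. Two facts at $y_M$ do the work. First, $y_M$ is a left maximum of $r'$, hence $r''(y_M) \ge 0$. Second, integrating $r'(s) \le M_1$ over $[0, y_M]$ gives
\begin{equation*}
r(y_M) = \mu + \int_0^{y_M} r'(s)\,ds \le \mu + M_1 y_M,
\end{equation*}
i.e.\ $0 < r(y_M) - M_1 y_M \le \mu$ (positivity coming from \eqref{f-structure-ineqn3}). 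Substituting into \eqref{ode-ivp3} at $y_M$ and using $r(y_M) \ge \mu$,
\begin{equation*}
r''(y_M) = (1+M_1^2)\left[\frac{n-1}{r(y_M)} - \frac{1+M_1^2}{\lambda(r(y_M)-M_1 y_M)}\right] \le \frac{1+M_1^2}{\lambda\mu}\bigl[(n-1)\lambda - (1+M_1^2)\bigr] < 0
\end{equation*}
by the choice of $M_1$, contradicting $r''(y_M) \ge 0$.

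Hence $r'(y) < M_1$ throughout $[0,y_0)$, which is \eqref{r'-upper-bd-a}, and then $r(y) = \mu + \int_0^y r'(s)\,ds \le \mu + M_1 y_0$ gives \eqref{r-upper-bd2}. The only conceptual point — what makes the argument avoid the apparent circularity of trying to bound $r'$ via an ODE whose right-hand side also contains $r$ — is the observation that the \emph{same} first-hit time $y_M$ supplies both the sign condition $r''(y_M)\ge 0$ and, by pure integration of $r'\le M_1$, the upper bound $r(y_M) - M_1 y_M \le \mu$ on the denominator $r-yr'$; once these two facts are on the table simultaneously, the required strict inequality $r''(y_M) < 0$ is immediate for $M_1$ large. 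Note that Lemma \ref{r-monotone-lemma2} is not actually needed for this step — only monotonicity of $r$ and the structural inequality \eqref{f-structure-ineqn3}.
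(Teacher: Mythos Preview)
Your proof is correct and takes a genuinely different route from the paper's. The paper simply drops the negative term in \eqref{ode-ivp3}, obtaining $\dfrac{r''}{1+r'^2}\le\dfrac{n-1}{r}\le\dfrac{n-1}{\mu}$, and integrates to get $\arctan r'(y)\le\dfrac{(n-1)y_0}{\mu}$, whence $M_1=\tan\!\bigl(\tfrac{(n-1)y_0}{\mu}\bigr)$. You instead retain the negative term and run a first-hit barrier argument: at the first time $y_M$ where $r'$ reaches $M_1$ you simultaneously have $r''(y_M)\ge 0$ and, by integrating $r'\le M_1$ on $[0,y_M]$, the bound $0<r(y_M)-y_Mr'(y_M)\le\mu$ on the denominator; feeding both into the ODE forces $r''(y_M)<0$ once $1+M_1^2>(n-1)\lambda$. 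The payoff is that your $M_1$ depends only on $(n-1)\lambda$ and not on $y_0$, whereas the paper's bound grows with $y_0$ (and in fact the paper's formula is only literally meaningful when $(n-1)y_0/\mu<\pi/2$). For the continuation argument in the proof of Theorem~\ref{existence_soln-thm2} this $y_0$-uniformity is a real advantage; and as you note, your argument uses only Lemma~\ref{r-monotone-lemma} and \eqref{f-structure-ineqn3}, not Lemma~\ref{r-monotone-lemma2}.
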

\begin{proof}
By \eqref{ode-ivp3}, \eqref{f-structure-ineqn3} and Lemma \ref{r-monotone-lemma},
\begin{equation}\label{r''-r'-ineqn1}
\frac{r''}{1+r'{}^2}\le\frac{n-1}{r}\le\frac{n-1}{\mu}\quad\forall 0<y<y_0.
\end{equation}
Integrating \eqref{r''-r'-ineqn1} over $(0,y_0)$,
\begin{equation}\label{r'-upper-bd11}
\tan^{-1}(r'(y))\le\frac{(n-1)y_0}{\mu}\quad\forall 0<y<y_0.
\end{equation} 
By Lemma \ref{r-monotone-lemma} and \eqref{r'-upper-bd11}, \eqref{r'-upper-bd-a} holds with 
\begin{equation*}
M_1=\tan\left(\frac{(n-1)y_0}{\mu}\right).
\end{equation*}
By \eqref{r'-upper-bd-a} we get \eqref{r-upper-bd2} and the lemma follows.
\end{proof}

\begin{lem}\label{r''-sign-lemma}
Let $n\ge 2$, $\lambda>\frac{1}{n-1}$, $\mu>0$ and $y_0>0$. Suppose $r(y)\in C^2([0,y_0))$ is the solution of \eqref{ode-ivp3} which satisfies \eqref{f-structure-ineqn3}. Then either
\begin{equation}\label{r''-+ve}
r''(y)>0\quad\forall 0<y<y_0
\end{equation}
or there exists a constant $y_1\in (0,y_0)$ such that $r''(y_1)=0$ and
\begin{equation}\label{r''-change-sign}
\left\{\begin{aligned}
&r''(y)>0\quad\forall 0<y<y_1\\
&r''(y)<0\quad\forall y_1<y<y_0
\end{aligned}\right.
\end{equation}
\end{lem}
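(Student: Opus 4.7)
By \eqref{r-2nd-derivative-x=0} and the hypothesis $\lambda>\frac{1}{n-1}$ we have $r''(0)=\left(n-1-\frac{1}{\lambda}\right)\frac{1}{\mu}>0$, so by continuity $r''>0$ on some right neighbourhood of $0$. The content of the lemma is therefore that once $r''$ first vanishes (if it ever does) on $(0,y_0)$, it stays strictly negative afterwards. My plan is to prove this by showing that at every interior zero of $r''$ the third derivative is strictly negative, so $r''$ crosses zero transversally and can do so at most once.

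To carry this out, I would differentiate the ODE \eqref{ode-ivp3} once. Setting $w(y):=r(y)-yr'(y)$ (so that $w'=-yr''$), the equation reads
\begin{equation*}
\frac{r''}{1+r'^{2}}=\frac{n-1}{r}-\frac{1+r'^{2}}{\lambda w}.
\end{equation*}
Differentiating both sides in $y$ and evaluating at any point $y_1\in(0,y_0)$ with $r''(y_1)=0$, all terms containing $r''(y_1)$ on the right-hand side drop out, and a short computation gives
\begin{equation*}
\frac{r'''(y_1)}{1+r'(y_1)^{2}}=-\frac{(n-1)\,r'(y_1)}{r(y_1)^{2}}.
\end{equation*}
By Lemma \ref{r-monotone-lemma}(i) we have $r'(y_1)>0$, and by \eqref{f-structure-ineqn3} and Lemma \ref{r-monotone-lemma2}-style bounds $r(y_1)>0$, so $r'''(y_1)<0$.

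Given this, the dichotomy follows by an elementary real-variable argument. If $r''$ never vanishes on $(0,y_0)$, then by $r''(0)>0$ and continuity \eqref{r''-+ve} holds. Otherwise, let $y_1$ be the smallest zero of $r''$ in $(0,y_0)$; then $r''>0$ on $(0,y_1)$ and, since $r'''(y_1)<0$, $r''$ is strictly decreasing at $y_1$, hence $r''<0$ on some interval $(y_1,y_1+\delta)$. If $r''$ had another zero $y_2>y_1$ in $(0,y_0)$, taking the infimum of such zeros and using $r''<0$ just to the left of $y_2$ would force $r'''(y_2)\ge 0$, contradicting the computation above applied at $y_2$. Hence $r''(y)<0$ on $(y_1,y_0)$, which is \eqref{r''-change-sign}.

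The only place a genuine obstacle could arise is in the bookkeeping of the differentiation step, because the term $\frac{1+r'^{2}}{\lambda w}$ produces contributions both from differentiating $1+r'^{2}$ (which vanishes at $y_1$ because it carries a factor of $r''$) and from $w'=-yr''$ (which also vanishes at $y_1$); one must check that the cancellation is complete and that the surviving term has the sign claimed. Once that identity for $r'''(y_1)$ is in hand, the rest is a one-line continuity argument.
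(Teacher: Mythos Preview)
Your proposal is correct and follows essentially the same route as the paper: differentiate the ODE, observe that at any interior zero of $r''$ every term on the right containing $r''$ drops out leaving $r'''(y_1)/(1+r'(y_1)^2)=-(n-1)\,r'(y_1)/r(y_1)^{2}<0$ by Lemma~\ref{r-monotone-lemma}(i), and then finish with the same maximal-interval/first-return argument. The only superfluous point is citing Lemma~\ref{r-monotone-lemma2} for $r(y_1)>0$; positivity of $r$ is already part of the hypothesis on the solution.
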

\begin{proof}
We will use a modification of the proof of Lemma 15 of \cite{DLW} to prove this lemma. By \eqref{r-2nd-derivative-x=0}, $r''(0)>0$. Hence there exists a constant $\delta>0$ such that $r''(s)>0$ for any $0<s<\delta$. Let $(0,y_1)$, $\delta\le y_1\le y_0$, be the maximal interval such that 
\begin{equation*}
r''(s)>0\quad\forall 0<s<y_1.
\end{equation*} 
If $y_1=y_0$, then \eqref{r''-+ve} holds.
If $y_1<y_0$, then $r''(y_1)=0$. By Lemma \ref{r-monotone-lemma} and \eqref{ode-ivp3}, 
\begin{align}
\frac{r'''(y)}{1+r'(y)^2}=&\frac{2r'(y)r''(y)^2}{(1+r'(y)^2)^2}-\frac{n-1}{r(y)}r'(y)-\frac{2r'(y)r''(y)}{\lambda(r(y)-yr'(y))}-\frac{y(1+r'(y)^2)r''(y)}{\lambda(r(y)-yr'(y))^2}\,\forall 0<y<y_0\label{r''-eqn}\\
\Rightarrow\,\frac{r'''(y_1)}{1+r'(y_1)^2}=&-(n-1)\frac{r'(y_1)}{r(y_1)^2}<0\notag
\end{align}  
Hence there exist a constant $0<\delta'<y_0-y_1$ such that $r''(y)<0$ for any $y_1<y<y_1+\delta'$.  Let $(y_1,z_0)$ be the maximal interval such that 
\begin{equation*}
r''(s)<0\quad\forall y_1<s<z_0.
\end{equation*} 
If $z_0<y_0$, then $r''(z_0)=0$ and $r''(z_0)\ge 0$. On the other hand by Lemma \ref{r-monotone-lemma} and \eqref{r''-eqn}, 
\begin{equation*}
\frac{r'''(z_0)}{1+r'(z_0)^2}=-(n-1)\frac{r'(z_0)}{r(z_0)^2}<0
\end{equation*} 
and contradiction arises. Hence $z_0=y_0$ and \eqref{r''-change-sign} follows.
\end{proof}

We are now ready to prove Theorem \ref{existence_soln-thm2}.

\noindent{\bf Proof of Theorem \ref{existence_soln-thm2}}: 
By Lemma \ref{short-time-existence-ode} there exists a constant $y_0'>0$ such that \eqref{ode-ivp3} has a unique solution $r(y)\in C^2([0,y_0'))$ which satisfies \eqref{r-2nd-derivative-x=0} and \eqref{f-structure-ineqn3} in $(0,y_0')$. 
Let $(0,y_0)$ be the maximal interval of existence of solution $r(y)\in C^2([0,y_0))$ of \eqref{ode-ivp3} which satisfies \eqref{f-structure-ineqn3} and \eqref{r-2nd-derivative-x=0}. Suppose $y_0<\infty$.
By Lemma \ref{existence-ode-extension-lem}, Lemma \ref{r-monotone-lemma2} and Lemma \ref{r'-upper-bd-lemma}, there exists a constant $\delta_1\in (0,y_0)$ such that for any $y_0/2<y_1<y_0$ there exists a unique solution $r_1(y)\in C^2([y_1,y_1+\delta_1))$ of \eqref{ode-ivp4} which satisfies \eqref{f-structure-ineqn4} with $r_0=r(y_1)$ and $r_1=r'(y_1)$. Let $y_1\in \left(y_0-\frac{\delta_1}{2},y_0\right)$ and let $r_1(y)\in C^2([y_1,y_1+\delta_1))$ be the unique solution of \eqref{ode-ivp4} given by Lemma \ref{existence-ode-extension-lem} which satisfies \eqref{f-structure-ineqn4}  with $r_0=r(y_1)$ and $r_1=r'(y_1)$. We then extend $r(y)$ to a solution of \eqref{ode-ivp2} in $(0,y_1+\delta_1)$ by setting $r(y)=r_1(y)$ for any $y_0\le y<y_1+\delta_1$. Since $y_1+\delta_1>y_0$, this contradicts the maximality of the interval $(0,y_0)$. Hence $y_0=\infty$ and there exists a unique solution $r(y)\in C^2([0,\infty))$ of the equation \eqref{ode-ivp2} which satisfies \eqref{f-structure-ineqn2} and \eqref{r-2nd-derivative-x=0} and the theorem follows.
{\hfill$\square$\vspace{6pt}}

\noindent{\bf Proof of Theorem \ref{asymptotic-behaviour-thm}}: We will give a simple proof different from the sketchy proof of this result in \cite{DLW} here. By (i) of Lemma \ref{r-monotone-lemma} \eqref{r-monotone-increase} holds. By Lemma \ref{r''-sign-lemma} either 
\begin{equation}\label{r''--ve}
r''(y)>0\quad\forall y>0
\end{equation}
or there exists $y_1>0$ such that \eqref{r''-sign} holds. Suppose \eqref{r''--ve} holds. Then
\begin{equation}\label{a1-defn}
a_1:=\lim_{y\to\infty}r'(y)\quad\mbox{ exists }
\end{equation}
and $a_1>0$. We now divide the proof into two cases.

\noindent{\bf Case 1}: $a_1=\infty$

\noindent Then there exists $y_2>0$ such that
\begin{equation}\label{r'-bd-below15}
r'(y)>\sqrt{2(n-1)\lambda}\quad\forall y>y_2
\end{equation}
By \eqref{ode-ivp2} and \eqref{r'-bd-below15},
\begin{align*}
\frac{r''(y)}{1+r'(y)^2}\le&\frac{1}{r(y)}\left(n-1-\frac{1+r'(y)^2}{\lambda}\right)\qquad\qquad\,\,\forall y>0\notag\\
\le&\frac{1}{r(y)}\left(n-1-\frac{1+2(n-1)\lambda}{\lambda}\right)<0\quad\forall y>y_2
\end{align*}
which contradicts \eqref{r''--ve}. Hence $a_1\ne\infty$.

\noindent{\bf Case 2}: $a_1<\infty$

\noindent By \eqref{f-structure-ineqn2},
\begin{equation}\label{y-r'/r-ratio-bd}
0<\frac{yr'(y)}{r(y)}<1\quad\forall y>0.
\end{equation}
Now by \eqref{a1-defn} and the l'Hosiptal rule,
\begin{equation}\label{y-r'/r-ratio}
\lim_{y\to\infty}\frac{r(y)}{y}=\lim_{y\to\infty}r'(y)=a_1\quad
\Rightarrow\quad\lim_{y\to\infty}\frac{yr'(y)}{r(y)}=\frac{\lim_{y\to\infty}r'(y)}{\lim_{y\to\infty}r(y)/y}=1.
\end{equation}
By \eqref{ode-ivp2}, \eqref{a1-defn}, \eqref{y-r'/r-ratio-bd}, \eqref{y-r'/r-ratio} and the l'Hosiptal rule,
\begin{align*}
\lim_{y\to\infty}\frac{r(y)r''(y)}{1+a_1^2}
=&\lim_{y\to\infty}\frac{r(y)r''(y)}{1+r'(y)^2}\notag\\
=&n-1-\frac{1+a_1^2}{\lambda}\cdot\frac{1}{\lim_{y\to\infty}\left(1-\frac{yr'(y)}{r(y)}\right)}\notag\\
=&-\infty
\end{align*}
which contradicts \eqref{r''--ve}. Hence $a_1<\infty$ does not hold. Thus by Case 1 and Case 2 \eqref{r''--ve} cannot hold. Hence there exists $y_1>0$ such that \eqref{r''-sign} holds.
 
By \eqref{r''-sign} and Lemma \ref{r-monotone-lemma}, \eqref{r'-infty-+ve} holds.  By \eqref{r-monotone-increase},
\begin{equation*}
a_2:=\lim_{y\to\infty}r(y)\in (\mu,\infty]\quad\mbox{ exists}.
\end{equation*}
Since by \eqref{r''-sign} $(r(y)-yr'(y))'=-yr''(y)>0$ for any $y>y_1$,
\begin{equation}\label{r-yr'-limit}
a_3:=\lim_{y\to\infty}(r(y)-yr'(y))\in (r(y_1)-y_1r'(y_1),\infty]\quad\mbox{ exists}.
\end{equation}
Suppose 
\begin{equation}\label{a2-finite}
a_2\in (\mu,\infty).
\end{equation}
Then
\begin{equation}\label{a1=0}
a_1=0.
\end{equation}
By \eqref{r'-infty-+ve}, \eqref{r-yr'-limit} and \eqref{a2-finite},
\begin{equation*}
a_4:=\lim_{y\to\infty}yr'(y)=a_2-a_3\in [0,a_2-r(y_1)+y_1r'(y_1))\quad\mbox{ exists}.
\end{equation*}
Suppose $a_4>0$. Then there exists $y_2>y_1$ such that
\begin{align*}
yr'(y)\ge &a_4/2\quad\forall y\ge y_2\\
\Rightarrow\qquad\,\, r(y)\ge& r(y_2)+\frac{a_4}{2}\log (y/y_2)\quad\forall y\ge y_2\\
\Rightarrow\qquad\quad a_2= &\infty
\end{align*}
which contradicts \eqref{a2-finite}. Hence 
\begin{equation}\label{a4=0}
a_4=0.
\end{equation}
Letting $y\to\infty$ in \eqref{ode-ivp2}, by \eqref{a1=0} and \eqref{a4=0},
\begin{equation*}
\lim_{y\to\infty}r''(y)=\left(n-1-\frac{1}{\lambda}\right)\frac{1}{a_2}>0
\end{equation*}
which contradicts \eqref{r''-sign}. Hence  \eqref{a2-finite} does not hold and $a_2=\infty$. Thus \eqref{r-infty-behaviour} holds and the theorem follows.

{\hfill$\square$\vspace{6pt}}

\end{document}